\title{A note on extendable sets of colorings and rooted minors}
\author{Zden\v{e}k Dvo\v{r}\'ak\thanks{Charles University, Prague, Czech Republic, \url{rakdver@iuuk.mff.cuni.cz}.  Supported by the
        ERC-CZ project LL2328 (Beyond the Four Color Theorem, registered as ``Realizace projektu hraničního výzkumu v oblasti
        teorie grafů -- barevnost a návrh algoritmů'') of the Ministry of Education of Czech Republic.} \and Jan M.\ Swart\thanks{The Czech Academy of Sciences, Institute of Information Theory and Automation, Prague, Czech Republic, \url{swart@utia.cas.cz}.}}
\date{January 2025}
\newtheorem{theorem}{Theorem}
\newtheorem{corollary}[theorem]{Corollary}
\newtheorem{observation}[theorem]{Observation}
\newtheorem{conjecture}[theorem]{Conjecture}
\newcommand{\Fcopy}[1]{F_{\mathrm{copy},#1}}
\newcommand{\Fenc}[1]{F_{\mathrm{enc},#1}}
\begin{document}

\maketitle

\begin{abstract}
DeVos and Seymour (2003) proved that for every set $C$ of 3-colorings of a set $X$ of
vertices, there exists a plane graph $G$ with vertices of $X$ incident with the outer
face such that a 3-coloring of $X$ extends to a 3-coloring of $G$ if and only if it belongs to $C$.
We prove a generalization of this claim for $k$-colorings of $X$-rooted-$K_{k+1}$-minor-free $K_{k+2}$-minor-free graphs.
\end{abstract}

\section{Introduction}

For a positive integer $k$, a \emph{set of $k$-colorings} of a set $X$
of vertices is a set $C$ of functions $f:X\to [k]$ closed under permutations
of colors, i.e., for every permutation $\pi:[k]\to [k]$, if $f\in C$,
then $\pi\circ f\in C$.  For a graph $G$, we say that $G$ \emph{realizes} $C$
if $X\subseteq V(G)$ and each function $f:X\to [k]$ belongs to $C$ if and only if $f$ extends to a proper $k$-coloring of $G$.
Suppose that the set $X$ is cyclically ordered.  In this case, we say that $C$ is \emph{planarly realizable} if there
exists a plane graph $G$ realizing $C$ such that all vertices of $X$
are incident with the outer face of $G$ and the order of appearance of the vertices of $X$ in the boundary of the outer
face of $X$ matches the prescribed cyclic ordering of $X$.

The proofs of many coloring results in planar graphs, including the proof of the Four Color Theorem,
depend on the following fact: For every $k\ge 4$, there exist many sets of $k$-colorings that
are not planarly realizable (indeed, the planarly realizable sets must satisfy Kempe chain constraints).
Briefly, these proofs are based on the existence of reducible configurations.  Let $K$ be a plane graph with a specified set $X$ of vertices incident with the outer face of $K$,
let $X$ be cyclically ordered in the order the vertices appear along the boundary of the outer face of $K$,
and let $C_K$ be the set of all $k$-colorings of $X$ that extend to a $k$-coloring of $K$.
In the simplest form, we say that $(K,X)$ is a \emph{reducible configuration} if every non-empty planarly realizable set of $k$-colorings of $X$ intersects $C$,
or equivalently, no non-empty subset of the complement of $C$ is planarly realizable.

The motivation for this definition is as follows. Suppose that $G$ is a plane graph and $\Delta$ is a closed disk in the plane whose boundary intersects the drawing of $G$ in a set $Y$ of vertices.
Let $G_1$ be the subgraph of $G$ drawn in $\Delta$ and $G_2$ the subgraph of $G$ drawn in the closure of the complement of $\Delta$.
If $(G_1,Y)$ is isomorphic to $(K,X)$, we say that the configuration $(K,X)$ \emph{appears} in $G$.  Since $(K,X)$ is a reducible configuration, it follows that 
$G$ is $k$-colorable if and only if $G_2$ is $k$-colorable.  This reduces the problem of proving that $G$ is $k$-colorable to the smaller graph $G_2$.

On the other hand, DeVos and Seymour~\cite{DS03} proved that the situation is different for $k=3$.
\begin{theorem}[DeVos and Seymour~\cite{DS03}]\label{thm-real3}
For every cyclically ordered set $X$ of vertices, every set of $3$-colorings of $X$ is planarly realizable.
\end{theorem}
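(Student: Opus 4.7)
The plan is to reduce the theorem to the construction, for each orbit $O$ of the $S_3$-action on $3$-colorings $X\to[3]$, of a plane graph $G_O$ with $X$ on its outer face in the given cyclic order and whose set of extendable colorings is exactly the complement $\overline{O}$. Given such gadgets, the graph $G$ formed as the union $\bigcup_{O\subseteq\overline{C}}G_O$ (identifying the $X$-vertices across summands) realizes $\bigcap_{O\subseteq\overline{C}}\overline{O}=C$, because a coloring of $X$ extends to the union iff it extends to each summand. To make this union planar with $X$ on the outer face, I would nest copies of $X$: inside the outer polygon $X$, place concentric copies $X^{(1)},\ldots,X^{(m)}$ linked to their neighbours by $K_4$-minus-edge ``equality gadgets'' (which force each vertex of a copy to share a color with the corresponding vertex of the next copy), and place each $G_O$ in the disc bounded by a separate copy of $X$.

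The crux is then the construction of $G_O$. Every orbit $O$ is determined by the partition $P_O=\{X_1,\ldots,X_r\}$ of $X$ into the color classes of its representatives, with $r\le 3$; a coloring $g$ lies in $O$ iff $g$ is constant on each $X_j$ and assigns distinct colors to different parts. As an illustration, for $|X|=3$ with $P_O=\{\{x_1,x_2\},\{x_3\}\}$, take three new vertices $a,b,c$ with $a$ adjacent to $\{x_1,x_2\}$, $b$ to $\{x_2,x_3\}$, $c$ to $\{x_1,x_3\}$, and add the single edge $bc$. In the forbidden pattern $x_1=x_2\ne x_3$, both $b$ and $c$ are forced to the unique remaining color, making $bc$ uncolorable, whereas for every other pattern a direct case analysis exhibits a valid extension. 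For general $|X|$ and arbitrary $P_O$, I would try the same recipe: attach auxiliary vertices to carefully chosen ``cross-part'' pairs of $X$, and add edges between them so that the forbidden coloring forces all chosen auxiliaries to the single residual color and so violates one of the added edges, while every other coloring leaves enough freedom for a valid $3$-coloring. Non-interlacing cross-part pairs can always be found because $r\le 3$, and this ensures that the auxiliary vertices and edges can be drawn inside the polygon $X$ without crossings.

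The main obstacle is guaranteeing that the obstruction triggers on $O$ alone and on no other orbit: distinct partitions can share cross-part pairs, so an auxiliary vertex attached to such a pair is forced to the residual color in both orbits' forbidden configurations. A naive gadget therefore risks forbidding strictly more than $O$. To single out $O$, one must combine several auxiliary vertices whose joint forced pattern is characteristic of $P_O$, and possibly add $K_4-e$ equality subgadgets within parts of $P_O$ to tighten the forcing. Verifying such a combined gadget works for every partition shape and every cyclic arrangement is where most of the technical work lies; once that is in hand, the reduction to orbit complements and the nested planar combination complete the proof.
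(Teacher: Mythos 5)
The paper does not contain a proof of this statement: it is a result of DeVos and Seymour~\cite{DS03}, cited and used as a black box, so there is no in-paper argument to compare yours against. Evaluating your proposal on its own terms: the outer framework is reasonable. Reducing to the complements $\overline{O}$ of single $S_3$-orbits is sound, because if each $\overline{O}$ with $O\subseteq\overline C$ is realized by some $G_O$, then a suitably combined graph realizes $\bigcap_O\overline O = C$; and the $K_4-e$ gadget does force equality of its two special endpoints in any 3-coloring, so equality gadgets are legitimate glue. Your gadget for $|X|=3$ and the partition $\{\{x_1,x_2\},\{x_3\}\}$ checks out (the forbidden pattern forces $b=c$ onto the one residual color, and every other pattern extends).

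However, the proposal has a genuine gap, and you name it yourself: you do not construct $G_O$ for a general orbit $O$ of 3-colorings of an arbitrary cyclically ordered $X$. You observe that distinct partitions share cross-part pairs, so a single auxiliary vertex can over-forbid, and you say that ``one must combine several auxiliary vertices whose joint forced pattern is characteristic of $P_O$\ldots\ Verifying such a combined gadget works for every partition shape and every cyclic arrangement is where most of the technical work lies.'' That remaining work is not a routine verification; it is essentially the theorem. In particular, it is not a priori clear that the complement of a single orbit is planarly realizable in the required cyclic order (for instance when the parts of $P_O$ interleave badly around the circle), and until a uniform construction and a ``no more, no less'' check are supplied, the argument is incomplete. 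There is also a smaller geometric issue in the combining step: placing each $G_O$ ``in the disc bounded by a separate concentric copy of $X$'' does not work as stated, since the disc bounded by $X^{(i)}$ must also contain $X^{(i+1)}$ and all subsequent gadgets; one instead needs to re-embed each $G_{O_i}$ in the annulus between consecutive copies (after cutting a hole in an internal face and reserving radial corridors for the equality gadgets). This is fixable, but it needs to be said carefully. The central missing piece remains the general construction of $G_O$.
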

Thus, for $3$-coloring, $(K,X)$ is a reducible configuration only if all 3-colorings of $X$ extend to $K$, making the reducible
confgurations for $3$-coloring much less prevalent.  Of course, there are other variants of reducibility (e.g., instead of fully removing $K$,
we can replace it by a smaller subgraph), but they are similarly affected.

The influential Hadwiger's conjecture states that for every positive integer $k$, every $K_{k+1}$-minor-free graph is $k$-colorable.
The conjecture has only been confirmed for $k\le 5$, see~\cite{RST93}, but it has sparked a lot of interest in the theory
of graph minors and in particular in their coloring properties.  Thus, it is important to consider the limits of the method
of reducible configurations in this context as well.

A \emph{model} of a graph $H$ in a graph $G$ is a function
$\mu$ that assigns pairwise vertex-disjoint connected subgraphs of $G$ to vertices
of $H$ so that for every edge $uv\in E(H)$, there exists an edge of $G$
with one end in $\mu(u)$ and the other end in $\mu(v)$.  If $H$ has a model in $G$,
then $H$ is a \emph{minor} of $G$; otherwise, we say that $G$ is \emph{$H$-minor-free}.
For a set $X\subseteq V(G)$, we say that the model is \emph{$X$-rooted} if
$V(\mu(v))\cap X\neq \emptyset$ for all $v\in V(H)$.  
If $H$ has an $X$-rooted model in $G$, we say that $H$ is an \emph{$X$-rooted minor} of $G$;
otherwise, we say that $G$ is \emph{$X$-rooted-$H$-minor-free}.

Since the graph $K_4$ is not outerplanar and $K_5$ is not planar, Theorem~\ref{thm-real3} has the following consequence.
\begin{corollary}\label{cor-real3}
For every set $X$ of vertices and every set $C$ of $3$-colorings of $X$, there exists
an $X$-rooted-$K_4$-minor-free $K_5$-minor-free graph $G$ realizing $C$.
\end{corollary}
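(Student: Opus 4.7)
The plan is to deduce the corollary directly from Theorem~\ref{thm-real3} by an apex-vertex argument. First I would fix an arbitrary cyclic ordering of $X$ (the corollary asks for nothing about cyclic order, so any choice works) and invoke Theorem~\ref{thm-real3} to get a plane graph $G$ realizing $C$ with all vertices of $X$ incident with the outer face. Since $G$ is planar, it is immediately $K_5$-minor-free, which gives one of the two conditions for free.

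The remaining task is to verify that $G$ is $X$-rooted-$K_4$-minor-free. My approach is the following: suppose for contradiction that $\mu$ is an $X$-rooted model of $K_4$ in $G$, with branch sets $\mu(u_1),\ldots,\mu(u_4)$, each containing some vertex of $X$. Form a plane graph $G'$ by adding a single new vertex $v$ in the outer face of $G$ joined to every vertex of $X$; because all vertices of $X$ lie on the outer face boundary, this can be drawn without crossings irrespective of their cyclic order, so $G'$ is planar. Taking $\mu(u_5)=\{v\}$ upgrades $\mu$ to an (unrooted) model of $K_5$ in $G'$, since each $\mu(u_i)$ for $i\le 4$ contains a vertex of $X$ adjacent to $v$. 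This contradicts the non-planarity of $K_5$ and so rules out any $X$-rooted $K_4$ minor of $G$.

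I do not expect any serious obstacle: the only point that needs care is the claim that adding the apex preserves planarity, and this is just the observation that all of $X$ lies on the boundary of a single face. The corollary is really packaging Theorem~\ref{thm-real3} with the standard facts that $K_5$ is not planar and that a planar graph with a distinguished outer face cannot contain $K_4$ as a minor rooted on that face (equivalently, $K_4$ is not outerplanar).
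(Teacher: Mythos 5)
Your proof is correct and matches the paper's (one-sentence) justification in substance: the paper derives the corollary from Theorem~\ref{thm-real3} by noting that $K_5$ is not planar and $K_4$ is not outerplanar, and your apex-vertex argument is exactly the standard way to make the second fact precise (an $X$-rooted $K_4$ minor with $X$ on the outer face would yield a $K_5$ minor after adding an apex adjacent to $X$). No gap here; you have simply spelled out the detail the paper leaves implicit.
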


In this note, we show that this statement generalizes as follows.
\begin{theorem}\label{thm-main}
For every integer $k\ge 3$, every set $X$ of vertices and every set $C$ of $k$-colorings of $X$, there exists
an $X$-rooted-$K_{k+1}$-minor-free $K_{k+2}$-minor-free graph $G$ realizing $C$.
\end{theorem}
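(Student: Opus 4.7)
Because $C$ is closed under color permutations, its complement inside the set of all $k$-colorings of $X$ is a union of orbits, each corresponding to a partition $\pi$ of $X$ into at most $k$ nonempty parts. My plan is to construct, for every forbidden partition $\pi$, a single graph $G_\pi$ containing $X$ whose realizable $k$-colorings of $X$ are exactly those not inducing $\pi$; the graph $G$ realizing $C$ is then obtained by identifying the copies of $X$ in all the $G_\pi$'s.  The realization property of the union is immediate from that of the individual gadgets, while the minor-freeness of the union needs a separate check.  The base case $k=3$ is supplied by Corollary~\ref{cor-real3}, and I would attempt the inductive step by building the gadget for $k$ colors using a smaller probe sub-gadget that is itself a coloring-realization problem with fewer colors.

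\textbf{Single-orbit gadget.} For a partition $\pi=\{X_1,\dots,X_r\}$, I would attach a probe vertex $v_i$ to each part $X_i$, linked to $X$ through a small sub-gadget engineered so that in any $\pi$-consistent coloring the probes are forced to take pairwise distinct colors exhausting the available palette, while any $\pi$-inconsistent coloring leaves enough flexibility.  Adding an apex $w$ adjacent to all the probes then leaves $w$ uncolorable precisely in the $\pi$-consistent case.  When $r<k$ the probes alone cannot cover all $k$ colors, so a small clique of ``slack'' vertices tied to $X$ must be added to supply the missing ones.  The internal structure of the probe sub-gadget is itself a coloring constraint of the type ``force $v_i$'s color to match $X_i$ when $X_i$ is monochromatic, and leave it free otherwise'', which I would realize using the inductive hypothesis for $k-1$ colors or, at the base of the induction, directly via Theorem~\ref{thm-real3}.

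\textbf{Main obstacle.} The principal challenge is that the probe sub-gadgets have to carry out a conditional task (rigid on $\pi$-consistent colorings, slack on all others) without themselves creating an $X$-rooted $K_{k+1}$-minor or a $K_{k+2}$-minor.  The naive direct adjacency $v_i\sim X\setminus X_i$ is too rigid: it also destroys $\pi$-inconsistent colorings in which $X$ happens to use all $k$ colors, and the apex together with these direct adjacencies easily produces a rooted clique minor.  Engineering a sub-gadget that satisfies both the coloring and the minor constraints, and then gluing several $G_\pi$'s along $X$ without exceeding the minor bounds, is where the bulk of the work lies.  The natural strategy is to arrange each $G_\pi$ so that only a small interface touches $X$, confining any large clique minor in the union to a single gadget, and then to verify inside one $G_\pi$ that a $K_{k+1}$-minor rooted at $X$ would necessarily force the single apex $w$ into several bags simultaneously---a contradiction.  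Handling this twin requirement of coloring rigidity in exactly the right direction, together with a narrow interface to $X$, is the decisive step.
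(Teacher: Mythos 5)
Your plan is genuinely different from the paper's: you propose to block each forbidden color-class partition $\pi$ of $X$ with its own gadget $G_\pi$ (probes attached to parts of $\pi$, plus slack vertices and a single apex), and then glue all the $G_\pi$'s along $X$. The paper instead works in the opposite direction: it builds gadgets ($\Fcopy{k}$ and $\Fenc{s,k}$) that translate the $k$-coloring of each $x_i$ into a $3$-coloring of a chain of auxiliary vertices $z_{i,3},\dots,z_{i,k}$, then realizes the resulting family of admissible $3$-colorings using the DeVos--Seymour theorem (Theorem~\ref{thm-real3}) and glues everything via clique-sums and $k-3$ apex vertices. Your route is conceptually reasonable, but as written it has several gaps that are not merely routine verifications.

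First, the probe sub-gadget---the component that must ``force $v_i$'s color to match $X_i$ when $X_i$ is monochromatic and leave it free otherwise''---is never constructed, and the suggested sources do not supply it. It is a constraint on $k$-colorings of $X_i\cup\{v_i\}$, not $(k-1)$-colorings, so the inductive hypothesis for $k-1$ colors is a type mismatch; and Theorem~\ref{thm-real3} only produces realizations for $3$-colorings. This is precisely the point at which the paper does its real work (the $\Fenc{s,k}$ gadget and the transfer to $3$-colorings), so leaving it open means the heart of the proof is missing. Second, the correctness of the ``probes $+$ apex $w$'' mechanism is not checked for $\pi$-inconsistent colorings in which some part $X_i$ is polychromatic: you need the freed probe $v_i$ to be able to repeat a color already used by another probe or slack vertex, and whether that is possible depends entirely on the unspecified sub-gadget. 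Third, the gluing step is not a clique-sum: you identify all of $X$, which can be arbitrarily large, so the standard preservation of $K_{k+2}$-minor-freeness and of $X$-rooted-$K_{k+1}$-minor-freeness under clique-sums does not apply. Your remark about confining large clique minors to a single gadget via a ``narrow interface to $X$'' is in tension with the fact that $G_\pi$ must, by design, interact with every part of $\pi$ and hence potentially with all of $X$. The paper sidesteps exactly this issue by ensuring every gluing interface is a clique of bounded size ($\{u,v\}$ or $\{u,v,w,y_4,\dots,y_k\}$), by placing all of $X$ on the outer face of a planar base graph $G_0$, and by adding apex vertices only afterwards; each of those steps has a clean minor-preservation lemma behind it. Until the probe sub-gadget is given explicitly and the gluing is reorganized into bounded-size clique-sums, the argument does not close.
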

Thus, similarly to Theorem~\ref{thm-real3}, reducible configurations in general cannot exploit the global properties when the number of
colors is smaller than what is postulated by Hadwiger's conjecture.

Holroyd~\cite{Hol97} proposed the following strengthening of Hadwiger's conjecture: If $G$ is $k$-colorable and every $k$-coloring of $G$ uses all $k$ colors on a set $X$ of vertices,
then $K_k$ is an $X$-rooted minor of $G$.  Holroyd proved this to be true for $k=3$, and Martinsson and Steiner~\cite{MS24} for $k=4$.
If true in general, this conjecture implies that Theorem~\ref{thm-main} does not extend to $X$-rooted-$K_k$-minor-free graphs, i.e., the following conjecture is a weakening of Holroyd's one.
\begin{conjecture}
For every integer $k\ge 3$, there exists a set $X$ of vertices and a set $C$ of $k$-colorings of $X$ such that
no $X$-rooted-$K_k$-minor-free graph $G$ realizes~$C$.
\end{conjecture}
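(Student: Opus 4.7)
The plan is to produce, for each $k\ge 3$, an explicit set $X$ and a permutation-closed set $C$ of $k$-colorings of $X$ such that every realizer of $C$ must contain $K_k$ as an $X$-rooted minor.

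First I would take $X$ to be a $k$-element set and let $C$ consist of all bijections $X\to [k]$, i.e., the $k$-colorings of $X$ that use every color. This $C$ is non-empty and closed under color permutations by construction. Suppose $G$ realizes $C$. Since every $f\in C$ extends to a proper $k$-coloring of $G$, the graph $G$ is $k$-colorable; conversely, the restriction to $X$ of any proper $k$-coloring of $G$ must lie in $C$, so every proper $k$-coloring of $G$ uses all $k$ colors on $X$. This is precisely the hypothesis of Holroyd's conjecture, which would then supply an $X$-rooted model of $K_k$ in $G$, contradicting $X$-rooted-$K_k$-minor-freeness.

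The main obstacle is that Holroyd's conjecture is only established for $k\le 4$ (\cite{Hol97} and \cite{MS24}), so this argument is unconditional only in those cases; for $k\ge 5$ it merely reduces the stated conjecture to Holroyd's. To prove the conjecture for larger $k$ by direct means, one would have to extract, from the surjectivity condition imposed by $C$, an explicit family of $k$ pairwise adjacent vertex-disjoint connected subgraphs of $G$ each meeting $X$—essentially solving an instance of Holroyd's problem itself. A potentially softer route is to seek witnesses $(X,C)$ that are not of the minimal form above: for instance, allowing $|X|>k$ and imposing extra combinatorial constraints on $C$ might permit a more rigid linkage or Menger-type argument that forces a $K_k$ model without invoking the full Holroyd conjecture. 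It is not clear, however, that this avoids the core difficulty, and I would expect any purely combinatorial construction in the general-$k$ case to be of comparable difficulty to Hadwiger's conjecture itself; this is why the statement is formulated only as a conjecture.
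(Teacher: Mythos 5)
The paper does not prove this statement; it is explicitly left as a conjecture, accompanied only by the remark that it is a weakening of Holroyd's conjecture (and hence known for $k\le 4$ via \cite{Hol97} and \cite{MS24}). Your reduction --- taking $|X|=k$ and $C$ the set of all bijections $X\to[k]$, observing that a realizer $G$ of this $C$ would be a $k$-colorable graph in which every $k$-coloring uses all $k$ colors on $X$, so Holroyd's conjecture would force an $X$-rooted $K_k$-minor --- is exactly the implication the paper has in mind, and your honest assessment that this leaves the case $k\ge 5$ open matches the paper's status for the statement.
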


\section{Proof}

Our proof depends on the result of DeVos and Seymour (Theorem~\ref{thm-real3}). Since the case $k=3$ is Corollary~\ref{cor-real3}, we assume $k\geq 4$ from now on.

We are going to need several gadgets.  The first one is a ``copy gadget'' $\Fcopy{k}(u,v)$, obtained from the clique on $k+1$ vertices by deleting an edge $uv$.  Clearly,
$f:\{u,v\}\to[k]$ extends to a $k$-coloring of $\Fcopy{k}(u,v)$ if and only if $f(u)=f(v)$.
Let $\Fcopy{k}^+=\Fcopy{k}(u,v)+uv$ be the clique of size $k+1$.

The second gadget is more complicated. For $s\in \{4,\ldots, k\}$, we are going to construct $\Fenc{s,k}(u,v,w;y_4,\ldots,y_k)$ as a graph $F$ that contains a set $A=\{u,v,w,y_4,\ldots,y_k\}$ of distinct vertices and
the following claim holds: Let $f:A\to [k]$ be a function such that $f(y_i)=i$ for all $i\in \{4,\ldots,k\}$. Then $f$ extends to a $k$-coloring of $F$ if and only if
\begin{itemize}
\item $f(u)\in \{1,2,3\}$ and $f(u)=f(v)=f(w)$, or
\item $f(u)=s$, $f(v),f(w)\in \{1,2,3\}$, and $f(v)\neq f(w)$, or
\item $f(u)\in \{4,\ldots,k\}\setminus\{s\}$ and $f(v)=f(w)\in\{1,2,3\}$.
\end{itemize}
\begin{figure}
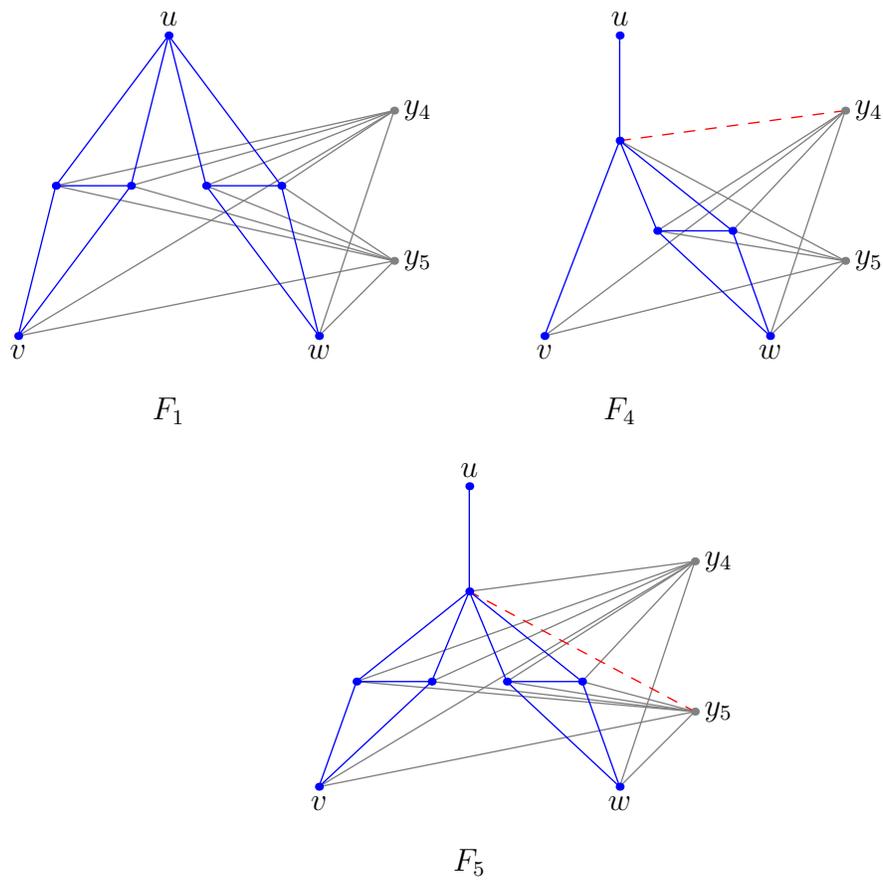

\begin{asy}
unitsize (2cm);

pair v[];

v[0] = (0,0);
v[1] = v[0] + (-1,-2);
v[2] = v[0] + (1,-2);
v[3] = v[0] + (-0.75,-1);
v[4] = v[0] + (-0.25,-1);
v[5] = v[0] + (0.25,-1);
v[6] = v[0] + (0.75,-1);
v[7] = v[0] + (1.5,-0.5);
v[8] = v[0] + (1.5,-1.5);

for (int i = 1; i <= 6; ++i)
  for (int j = 7; j <= 8; ++j)
    draw (v[i]--v[j], 0.5white);
draw(v[0]--v[3]--v[1]--v[4]--v[0]--v[5]--v[2]--v[6]--v[0], blue);
draw(v[3]--v[4], blue);
draw(v[5]--v[6], blue);
for (int i = 0; i <= 6; ++i)
  dot (v[i], blue);
for (int i = 7; i <= 8; ++i)
  dot (v[i], 0.5white);

label ("$u$", v[0], N);
label ("$v$", v[1], S);
label ("$w$", v[2], S);
label ("$y_4$", v[7], E);
label ("$y_5$", v[8], E);
label ("$F_1$", v[0] + (0, -2.5));

v[0] = (3,0);
v[1] = v[0] + (-0.5,-2);
v[2] = v[0] + (1,-2);
v[3] = v[0] + (0,-0.7);
v[4] = v[0] + (0.25,-1.3);
v[5] = v[0] + (0.75,-1.3);
v[6] = v[0] + (1.5,-0.5);
v[7] = v[0] + (1.5,-1.5);

for (int i = 1; i <= 5; ++i)
  for (int j = 6; j <= 7; ++j)
    if (i != 3 || j != 6)
      draw (v[i]--v[j], 0.5white);
draw(v[0]--v[3]---v[4]--v[2]--v[5]--v[4], blue);
draw(v[5]--v[3]--v[1], blue);

draw (v[3] -- v[6], red + dashed);

for (int i = 0; i <= 5; ++i)
  dot (v[i], blue);
for (int i = 6; i <= 7; ++i)
  dot (v[i], 0.5white);

label ("$u$", v[0], N);
label ("$v$", v[1], S);
label ("$w$", v[2], S);
label ("$y_4$", v[6], E);
label ("$y_5$", v[7], E);
label ("$F_4$", v[0] + (0, -2.5));

v[0] = (2,-3);
v[1] = v[0] + (-1,-2);
v[2] = v[0] + (1,-2);
v[3] = v[0] + (-0.75,-1.3);
v[4] = v[0] + (-0.25,-1.3);
v[5] = v[0] + (0.25,-1.3);
v[6] = v[0] + (0.75,-1.3);
v[7] = v[0] + (0,-0.7);
v[8] = v[0] + (1.5,-0.5);
v[9] = v[0] + (1.5,-1.5);

for (int i = 1; i <= 7; ++i)
  for (int j = 8; j <= 9; ++j)
    if (i != 7 || j != 9)
      draw (v[i]--v[j], 0.5white);

draw (v[7] -- v[9], red + dashed);

draw(v[0]--v[7]--v[3]--v[1]--v[4]--v[7]--v[5]--v[2]--v[6]--v[7], blue);
draw(v[3]--v[4], blue);
draw(v[5]--v[6], blue);
for (int i = 0; i <= 7; ++i)
  dot (v[i], blue);
for (int i = 8; i <= 9; ++i)
  dot (v[i], 0.5white);

label ("$u$", v[0], N);
label ("$v$", v[1], S);
label ("$w$", v[2], S);
label ("$y_4$", v[8], E);
label ("$y_5$", v[9], E);
label ("$F_5$", v[0] + (0, -2.5));

\end{asy}
\caption{Pieces of the gadget $\Fenc{4,5}(u,v,w;y_4,y_5)$.}\label{fig-gadget}
\end{figure}
Thus, this gadget encodes information about the color of $u$ in the 3-coloring of $v$ and $w$: The vertices $v$ and $w$ have different colors if and only if $u$ has color $s$, and if the color of $u$ belongs to $\{1,2,3\}$, it is copied to the colors of $v$ and $w$.  We construct $F$ from several pieces, see Figure~\ref{fig-gadget}:
\begin{itemize}
\item Let $F'_1$ be graph with vertex set $\{u,v,w,v_1,v_2,w_1,w_2\}$ and edges $uv_i$, $v_iv$, $uw_i$, $w_iw$ for $i\in\{1,2\}$, $v_1v_2$, and $w_1w_2$.
Let $F_1$ be obtained from $F'_1$ by adding the vertices $y_4$, \ldots, $y_k$ adjacent to all vertices of $V(F'_1)\setminus\{u\}$.
Note that $f$ extends to a $k$-coloring of $F_1$ if and only if either
\begin{itemize}
\item $f(u)\in\{4,\ldots,k\}$ and $f(v),f(w)\in\{1,2,3\}$, or
\item $f(u)=f(v)=v(w)\in\{1,2,3\}$.
\end{itemize}
\item Let $F'_2$ be the graph with vertex set $\{u,v,w,u',w_1,w_2\}$ and edges $u'w_i$, $w_iw$ for $i\in\{1,2\}$, $uu'$, $w_1w_2$, and $u'v$.
Let $F''_2$ be obtained from $F'_2$ by adding the vertices $y_4$, \ldots, $y_k$ adjacent to all vertices of $V(F'_2)\setminus\{u\}$.
Let $F_s=F''_2-u'y_s$.  Note that $f$ extends to a $k$-coloring of $F_s$ if and only if either
\begin{itemize}
\item $f(u)\neq s$ and $f(v),f(w)\in\{1,2,3\}$, or
\item $f(u)=s$, $f(v),f(w)\in\{1,2,3\}$, and $f(v)\neq f(w)$.
\end{itemize}
\item Let $F'_3$ be the graph with vertex set $\{u,v,w,u',v_1,v_2,w_1,w_2\}$
and edges $u'v_i$, $v_iv$, $u'w_i$, $w_iw$ for $i\in\{1,2\}$, $uu'$, $v_1v_2$, and $w_1w_2$.
Let $F''_3$ be obtained from $F'_3$ by adding the vertices $y_4$, \ldots, $y_k$ adjacent to all vertices of $V(F'_3)\setminus\{u\}$.
For $r\in \{4,\ldots,k\}\setminus\{s\}$, let $F_r=F''_3-u'y_r$.  Note that $f$ extends to a $k$-coloring of $F_r$ if and only if either
\begin{itemize}
\item $f(u)\neq r$ and $f(v),f(w)\in\{1,2,3\}$, or
\item $f(u)=r$ and $f(v)=f(w)\in \{1,2,3\}$.
\end{itemize}
\end{itemize}
Then $F$ is obtained from the disjoint union of copies of $F_1$, $F_4$, \ldots, $F_k$ by identifying the copies of the vertices $A$.
Let $\Fenc{s,k}^+$ be obtained from $F$ by adding all edges of a clique on $\{u,v,w;y_4,\ldots,y_k\}$.

\begin{observation}\label{obs-mifr}
For all integers $k\ge 4$ and $s\in\{4,\ldots,k\}$, the graphs $\Fcopy{k}^+$ and $\Fenc{s,k}^+$ are $K_{k+2}$-minor-free.
\end{observation}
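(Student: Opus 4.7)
Since $\Fcopy{k}^+=K_{k+1}$ has only $k+1$ vertices, it trivially contains no $K_{k+2}$-minor, and the real work is the case of $\Fenc{s,k}^+$. For that graph, my plan is to exploit its structure as a clique-sum along the $k$-clique $A=\{u,v,w,y_4,\ldots,y_k\}$. Writing $F_i^+$ for piece $F_i$ together with all edges of the clique on $A$, the graph $\Fenc{s,k}^+$ is the union of the pieces $F_1^+$, $F_s^+$, and $F_r^+$ for $r\in\{4,\ldots,k\}\setminus\{s\}$, where any two distinct pieces meet exactly in $A$ and have disjoint non-$A$ vertex sets. A standard clique-sum lemma --- if $G=G_1\cup G_2$ with $V(G_1)\cap V(G_2)$ inducing a clique $S$, and neither $G_i$ has a $K_t$-minor, then nor does $G$ --- then reduces the task to showing each $F_i^+$ individually is $K_{k+2}$-minor-free. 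The lemma itself is proved by projecting a hypothetical $K_t$-minor of $G$ onto $V(G_1)$: connectivity of projected branch sets is preserved by shortcutting paths through $S$, and pairwise adjacency survives because either the original connecting edge already lies in $G_1$ or both branch sets meet $S$ and the $S$-clique provides an edge.

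For a single piece $F_i^+$, my approach combines three elementary observations verified by direct inspection. First, every non-$A$ vertex of $F_i$ fails at least one adjacency to $A$, so the clique number of $F_i^+$ is exactly $k$. Second, only $k$ vertices of $F_1^+$ and of each $F_r^+$ have degree at least $k+1$, and only $k-2$ such vertices exist in $F_s^+$; any singleton branch set of a $K_{k+2}$-minor must come from this small set. Third, $|V(F_i^+)|\le k+5$, which caps the number of non-singleton branch sets at three. Combining these yields a short finite case analysis: $F_s^+$ is ruled out by the second observation alone, while in $F_1^+$ and $F_r^+$ every remaining configuration is killed by a specific missing edge --- most notably the non-adjacency of $\{v_1,v_2\}$ with $\{w_1,w_2\}$ inside the non-$A$ part of the piece, or the non-adjacency of $w$ with the vertex $u'$ of the piece $F_r$.

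I expect the three-non-singleton case in $F_r^+$ to be the main obstacle: the analysis branches on which of the $k$ high-degree vertices is relegated to a non-singleton branch set, and in each subcase the contradiction reduces to one of the non-edges above, or to the non-edge $y_r\not\sim u'$ created in $F_r$ by the deletion of the edge $u'y_r$.
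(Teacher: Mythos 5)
Your plan is correct, but it follows a genuinely different route from the paper's. You decompose $\Fenc{s,k}^+$ as a clique-sum of the pieces $F_i^+=F_i\cup K_A$ along the full $k$-clique $A=\{u,v,w,y_4,\ldots,y_k\}$, and then rule out a $K_{k+2}$-minor in each piece by a finite case analysis driven by degree counts, vertex counts, and a handful of specific non-edges (such as $\{v_1,v_2\}\not\sim\{w_1,w_2\}$). The paper instead decomposes along the \emph{triangle} $uvw$: it observes that $F'_1+uvw$, $F'_2+uvw$, and the copies of $F'_3+uvw$ are planar, hence $K_5$-minor-free; that a clique-sum on a triangle preserves $K_5$-minor-freeness; and that adding the $k-3$ universal apex vertices $y_4,\ldots,y_k$ to a $K_5$-minor-free graph gives a $K_{k+2}$-minor-free graph. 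The paper's argument therefore avoids any case analysis at all, trading your elementary degree/adjacency bookkeeping for two standard lemmas (clique-sums and apex vertices both behave well with respect to excluded complete minors) plus a glance at the drawings of the small planar pieces. Your approach is longer to execute in full --- the three-non-singleton case for $F_r^+$ in particular requires tracking the branch set containing $u$ (which has degree only $k$ and is isolated among $\{v_1,v_2,w_1,w_2\}$) and then falling back on the non-adjacency of $\{v_1,v_2\}$ and $\{w_1,w_2\}$, with the non-edges $u'\not\sim w$ and $u'\not\sim y_r$ giving an alternative shortcut when $u'$ is hypothesized to be a singleton --- but it has the virtue of not invoking planarity and would be checkable mechanically. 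One small caveat: your remark that ``$F_s^+$ is ruled out by the second observation alone'' implicitly still uses the vertex-count bound (only $k-2$ high-degree vertices forces at least four non-singleton branch sets, which the $|V(F_s^+)|=k+3$ bound forbids); both observations are needed there too.
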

\begin{proof}
This is clear for $\Fcopy{k}^+=K_{k+1}$.  The graph of $\Fenc{s,k}^+$ can be obtained as follows:  We start with the planar
graphs $F'_1+uvw$, $F'_2+uvw$, and $k-4$ copies of $F'_3+uvw$, which are $K_5$-minor-free.  We take the clique-sum of
these graphs on the triangle $uvw$, obtaining another $K_5$-minor-free graph.  Finally, we add $k-3$ vertices $y_4$, \ldots, $y_k$
and the incident edges, resulting in a $K_{k+2}$-minor-free graph.
\end{proof}

We are now ready to prove the main result.
\begin{figure}
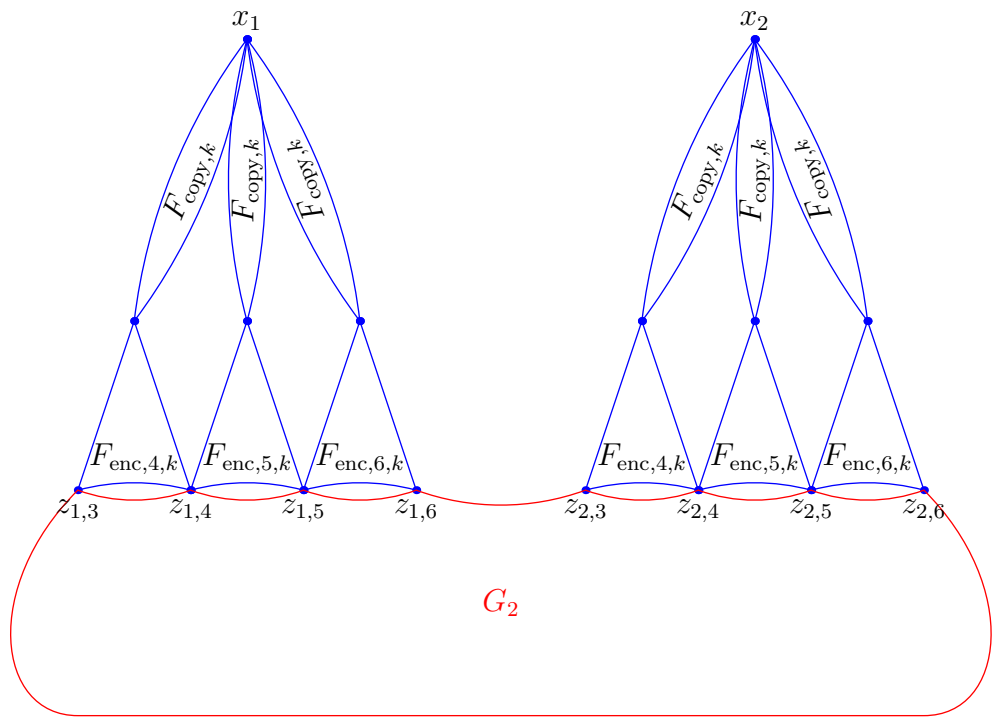

\begin{asy}
unitsize (1.5cm);

void copy_gadget (pair f, pair t)
{
  dot (f, blue);
  dot (t, blue);
  draw (f{dir(degrees(t-f) + 15)} .. {dir(degrees(t-f) - 15)}t{dir(degrees(f-t) + 15)} .. {dir(degrees(f-t) - 15)}cycle, blue);
  label (rotate(degrees(f-t)) * Label("$F_{\mathrm{copy},k}$"), (f+t)/2);
}

void enc_gadget (pair u, pair v, pair w, int s)
{
  draw (u -- v{dir(15)} .. {dir(-15)}w--cycle, blue);
  label (Label("$F_{\mathrm{enc}," + (string) s + ",k}$"), (u+v+w)/3, S);
  dot (u, blue);
  dot (v, blue);
  dot (w, blue);
}

pair x[], xp[], z[];

for (int i = 0; i < 2; ++i)
  {
    x[i] = (4.5i,0);
    for (int j = 0; j <= 2; ++j)
      {
        xp[3i+j] = x[i] + (-1 + j, -2.5);
	copy_gadget (x[i],xp[3i+j]);
      }	
    for (int j = 0; j <= 3; ++j)
      z[4i+j] = x[i] + (-1.5 + j, -4);
    for (int j = 0; j <= 2; ++j)
      enc_gadget (xp[3i+j],z[4i+j],z[4i+j+1], j+4);
  }

for (int i = 0; i < 2 * 4 - 1; ++i)
  draw(z[i]{dir(-20)}..{dir(20)}z[i+1], red);

draw (z[0]{dir(-135)} .. {dir(0)}(z[0]+(0,-2)) -- (z[7]+(0,-2)){dir(0)}..{dir(135)}z[7], red);
label ("$G_2$", (z[0]+z[7])/2 + (0,-1), red);

for (int i = 0; i < 2; ++i)
  {
    label ("$x_" + (string) (i+1) + "$", x[i], N);
    for (int j = 0; j <= 3; ++j)
      label ("$z_{" + (string) (i+1) + "," + (string) (j+3) + "}$", z[4i+j], S);
  }
\end{asy}
\caption{The construction from the proof of Theorem~\ref{thm-main}, the case $m=2$ and $k=6$.  The apex vertices $y_4$, \ldots, $y_k$ are not shown.}\label{fig-constr}
\end{figure}

\begin{proof}[Proof of Theorem~\ref{thm-main}]
Let $X=\{x_1,\ldots,x_m\}$, and let $Y=\{y_4,\ldots,y_k\}$ and $Z=\{z_{i,j}:i\in[m],j\in\{3,\ldots,k\}\}$ be sets of distinct vertices. Given a function $f:X\to[k]$, let $C_f$ be the set of functions $g:Z\to [3]$ such that for each $i\in[m]$,
\begin{itemize}
\item if $f(x_i)\in \{1,2,3\}$, then $g(z_{i,3})=g(z_{i,4})=\ldots=g(z_{i,k})=f(x_i)$, and
\item if $f(x_i)=s\in\{4,\ldots,k\}$, then $g(z_{i,3})=\ldots=g(z_{i,s-1})\neq g(z_{i,s})=\ldots =g(z_{i,k})$.
\end{itemize}
A function $h:X\cup Y\cup Z\to [k]$ is \emph{$Y$-rainbow} if $h(y_i)=i$ for $i\in \{4,\ldots,k\}$.  We are going to construct a graph $G_1$ with $X\cup Y\cup Z\subseteq V(G_1)$ such that every $Y$-rainbow function $h$ can be extended to a $k$-coloring of $G_1$ if and only if its
restrictions $f$ and $g$ to $X$ and $Z$, respectively, satisfy $g\in C_f$. Let us remark that this implies that the restriction of $h$ to $Z$ is a 3-coloring. Thus, for each $i\in[m]$, the graph $G_1$ uniquely encodes the color of $x_i$ in the 3-coloring of $\{z_{i,3},\ldots,z_{i,k}\}$.

We construct $G_1$ as follows, see the top part of Figure~\ref{fig-constr}. We start with the set
of vertices $X\cup Y\cup Z$. For $i\in [m]$ and $j\in \{4,\ldots, k\}$, we add a
vertex $x_{i,j}$ and a copy of the gadget $\Fcopy{k}(x_i,x_{i,j})$. For
$i\in[m]$ and $s\in \{4,\ldots,k\}$, we add a copy of the
gadget $\Fenc{s,k}(x_{i,s},z_{i,s-1},z_{i,s};y_4,\ldots,y_k)$. Then $G_1$ has
the property just described.

Let $C$ be the set of $k$-colorings of $X$ that we want to realize and let $C'=\bigcup_{f\in C} C_f$. Observe that $C'$ is a set of $3$-colorings of $Z$. Let $G'_2$ be a plane graph with the vertices
\[
z_{1,3}, z_{1,4}, \ldots, z_{1,k},z_{2,3}, \ldots z_{2,k},\ldots, z_{m,3},\ldots, z_{m,k}
\]
drawn in the boundary of the outer face of $G'_2$ in order such that $G'_2$
realizes $C'$; such a plane graph exists by Theorem~\ref{thm-real3}. Let $G_2$
be the graph obtained from $G'_2$ by adding $Y$ and all edges between $Y$ and
$G'_2$ as well as the edges of the clique on $Y$. We define $G$ to be the union
of $G_1$ and $G_2$. We claim that $G$ realizes $C$.

Indeed, for any $k$-coloring $f\in C$, we can choose a $Y$-rainbow function $h:X\cup Y\cup Z\to [k]$ that extends $f$ and
whose restriction $g$ to $Z$ satisfies $g\in C_f$. Such a function can be
extended to a $k$-coloring of both $G_1$ and $G_2$, and hence also of $G$.

Conversely, suppose that 
$h:X\cup Y\cup Z\to[k]$ extends
to a $k$-coloring of $G$, and let $f$ be the restriction of $h$ to $X$.  We need to show that $f\in C$.  Since $C$ is a set of $k$-colorings, we can permute the colors and without loss of generality assume that $h$ is $Y$-rainbow.
By the choice of $G_2$, the restriction
$g$ of $h$ to $Z$ is a $3$-coloring and $g\in C'$, and by the choice of $G_1$, we have $g\in C_f$. It follows that $C_f\cap C'\neq\emptyset$, and thus
$f\in C$ by the choice of $C'$.

It remains to argue that $G$ is $X$-rooted-$K_{k+1}$-minor-free and $K_{k+2}$-minor-free. Observe that $G$ can be obtained as follows.  We start with the plane graph $G_0$ consisting of $G'_2$ and and the graph drawn in Figure~\ref{fig-constr} in blue (i.e., the triangles $x_{i,j}z_{i,j-1}z_{i,j}$ and the edges $x_ix_{i,j}$ for $i\in[m]$ and $j\in\{4,\ldots,k\}$).
Note that the vertices of $X$ are drawn in the boundary of the outer face of $G_0$, and thus
$G_0$ is $X$-rooted-$K_4$-minor-free and $K_5$-minor-free.  We then add $k-3$ universal vertices of $Y$, obtaining a graph
which is $X$-rooted-$K_{k+1}$-minor-free and $K_{k+2}$-minor free.  Finally, we perform clique-sums of this graph with copies of  graphs $\Fcopy{k}^+$ and $\Fenc{\star,k}^+$.
Since the added graphs are $K_{k+2}$-minor-free and all vertices of $X$ are contained in $G_0$, it follows that the resulting graph $G$ is also $X$-rooted-$K_{k+1}$-minor-free and $K_{k+2}$-minor-free.
\end{proof}

\end{document}